\documentclass[graybox]{svmult}

\pdfpageattr{
  /CropBox [100 100 500 740]
}

\usepackage{amsmath,amssymb,amsfonts}
\usepackage{graphicx,epsfig,url}
\usepackage{psfrag}
\usepackage{mathptmx}       
\usepackage{helvet}         
\usepackage{courier}        
\usepackage{makeidx}         
\usepackage[bottom]{footmisc}
\usepackage{float}
\usepackage{color,xcolor}

\usepackage{latexsym}
\usepackage{pstricks}
\usepackage{bbm}
\definecolor{lightblue}{rgb}{0.22,0.45,0.70}
\usepackage[colorlinks=true,breaklinks=true,linkcolor=lightblue,citecolor=lightblue]{hyperref}
\usepackage{cleveref}

\newcommand{\cblue}[1]{{\leavevmode\color{blue}{#1}}}

\newcommand{\dx}{\,\mbox{d}x}

\newcommand{\CP}{\mathcal{P}}

\newcommand{\CT}{\mathcal{T}}
\newcommand{\CV}{\mathcal{V}}
\newcommand{\CJ}{\mathcal{J}}

\newcommand{\bF}{\mbox{\boldmath{$F$}}}
\newcommand{\bP}{\mbox{\boldmath{$P$}}}

\newcommand{\bn}{\mbox{\boldmath{$n$}}}

\newcommand{\bx}{\mbox{\boldmath{$x$}}}

\newcommand{\brho}{\mbox{\boldmath{$\rho$}}}
\newcommand{\bsigma}{\mbox{\boldmath{$\sigma$}}}
\newcommand{\sbsigma}{\mbox{\boldmath{\scriptsize $\sigma$}}}

\newcommand{\bphi}{\mbox{\boldmath{$\phi$}}}
\newcommand{\bpsi}{\mbox{\boldmath{$\psi$}}}

\newcommand{\btau}{\mbox{\boldmath{$\tau$}}}
\newcommand{\sbtau}{\mbox{\boldmath{\scriptsize $\tau$}}}

\newcommand{\bM}{\mathbf{M}}
\newcommand{\bbV}{\mathbb{V}}

\newcommand{\ol}{\overline}

\newcommand{\BBR}{\mbox{$\mathbb{R}$}}

\newfont{\twelvemsb}{msbm10 at 11.6pt}

\renewcommand{\div}{\mathop{\rm div}\nolimits}

\DeclareMathOperator*{\argmin}{arg\,min}

\usepackage{colordvi}

\allowdisplaybreaks

\makeindex

\begin{document}
\title*{An ultra-weak three-field finite element  formulation for the biharmonic and extended Fisher--Kolmogorov equations}
\titlerunning{Ultra-weak mixed FEM for biharmonic and extended Fisher--Kolmogorov problems}
\author{Rekha Khot, Bishnu P. Lamichhane, and Ricardo Ruiz-Baier}
\authorrunning{Khot, Lamichhane \& Ruiz-Baier}
\institute{Rekha Khot \at Department of Mathematics, Indian Institute of Technology Palakkad, Kanjikode, Palakkad, Kerala
678623, India, \email{RekhaKhot@iitpkd.ac.in}
\and Bishnu P.~Lamichhane \at School of Mathematical \& Physical Sciences,
University of Newcastle,
University Drive,
Callaghan, NSW 2308, Australia,  \email{Bishnu.Lamichhane@newcastle.edu.au}
\and Ricardo Ruiz-Baier \at School of Mathematics, Monash University, 9 Rainforest Walk, Melbourne 3800 VIC, Australia, 
and Universidad Adventista de Chile, Casilla 7-D, Chill\'an, Chile, \email{Ricardo.RuizBaier@monash.edu}} 

\maketitle
\abstract{This paper discusses a so-called ultra-weak three-field formulation of the biharmonic problem where the solution, its gradient, and an additional Lagrange multiplier are the three unknowns. We establish the well-posedness of the problem using the abstract theory for saddle-point problems, and develop a conforming finite element scheme based on Raviart--Thomas discretisations of the two auxiliary variables. The well-posedness of the discrete formulation and the corresponding a priori error estimate are proved using a discrete inf-sup condition. 
We further extend the analysis to the  time-dependent semilinear equation, namely extended Fisher--Kolmogorov equation. 
We present a few numerical examples to demonstrate the performance of our approach.}

\keywords{Mixed finite element methods, biharmonic problem, extended Fisher--Kolmogorov equation, Raviart--Thomas finite element.}

\noindent\textbf{AMS Subject Classifications:} 65N30, 65N15.

\section{Introduction}

Fourth-order partial differential equations appear in many 
applications. Some examples are thin beams and plates, strain 
gradient elasticity \cite{Cia78,EGH02}, the Stokes problem 
\cite{GR86} and  phase separation of a binary mixture \cite{WKG06}.
The variational formulation of these problems requires 
$H^2$-conforming finite elements, which are not so easy to construct 
on unstructured meshes. There are a few classical approaches to avoid this difficulty.  The first one is to use  discontinuous Galerkin methods or non-conforming approaches as in \cite{Morley, EGH02,BS05,WKG06}. The non-conforming approach based on Morley finite element is quite efficient but it does not produce a \textit{uniform} approximation when applied to a singularly perturbed problem \cite{MTW02}.
The second approach is to apply a mixed formulation as in \cite{CR74,CG75,Fal78,Cia78,FO80,BOP80,Mon87,DP01,farrell2022new}.  

This paper is concerned about a mixed formulation of 
the biharmonic equation and its finite element discretisation. 
As in  \cite{JP82,CHH00,Lam11b, BLS17}, 
our formulation is  based on using the gradient of the solution as  a new variable.
We formulate our problem as 
minimising the square of the $L^2$-norm of 
the divergence of the gradient  in a suitable Hilbert space as in \cite{BLS17}.
Then we write a variational equation of the constraint using a Lagrange multiplier. 
However,  in contrast to  \cite{BLS17}, we reformulate the constraint using  an integration by parts to recast the problem with the solution in the $L^2$-space.
Therefore, we call our formulation \emph{ultra-weak}. 
The gradient of the solution and the Lagrange multiplier both are discretised using 
a Raviart--Thomas finite element space. We note that a very similar formulation has been introduced in \cite{farrell2022new} for the biharmonic problem. Here we also carry out the error analysis for the extended Fisher--Kolmogorov (EFK) equation, which is a time-dependent {semi}linear fourth-order equation with a parameter that  multiplies the biharmonic term and thus controls the strength of higher-order diffusion/curvature penalty.

The contents of the paper have been organised as follows. Section~\ref{sec:mixed} describes the strong form of the biharmonic problem with different boundary conditions and shows a derivation of the proposed ultra-weak formulation.  In Section~\ref{sec:wellp}, we address the unique solvability of the problem using the Babu\v{s}ka--Brezzi theory for saddle-point problems. We introduce the discrete problem in Section~\ref{sec:fe}, show discrete well-posedness, and establish quasi-optimality results, which for Raviart--Thomas finite elements imply optimal convergence rates. Then, we 
extend the approach to the extended Fisher--Kolmogorov equation in Section~\ref{sec:EFK},  prove a stability estimate and show the convergence of the numerical method.
Finally, Section~\ref{sec:results} presents numerical examples in 2D and 3D illustrating the properties of the proposed three-field formulation, and focusing on the extended Fisher--Kolmogorov problem. We also present a brief parametric study with the performance of the mixed finite element scheme when varying the curvature coefficient. 

\section{A mixed formulation for the biharmonic equation}\label{sec:mixed}
In this section we discuss a new mixed formulation of the 
biharmonic problem. Let  $\Omega \subset \BBR^d$, $d \in \{2,3\}$,  be a bounded convex domain 
with polygonal or polyhedral boundary $\Gamma=\partial\Omega$ and 
 an unit normal $\bn$  pointing outward to $\Gamma$.
We consider the biharmonic equation 
\begin{equation}\label{biharm}
\Delta^2 u =f \quad\text{in}\quad \Omega
\end{equation}
 with either simply supported boundary conditions
 \begin{equation}\label{bc1}
u= \Delta u =0\quad\text{on}\quad \Gamma,
\end{equation}
or with Cahn--Hilliard boundary conditions
\begin{equation}\label{bc2}
\frac{\partial u}{\partial \bn}=\frac{\partial \Delta u}{\partial \bn}=0\quad\text{on}\quad \Gamma.
\end{equation}

Here we consider a new mixed formulation of 
the biharmonic equation and its finite element discretisation. We aim to derive an  ultra-weak formulation, where 
the solution $u$ is only square integrable.  
There are many  mixed schemes for the biharmonic 
problem \cite{CR74,CG75,Cia78,JP82,CHH00,Lam11a,Lam11b}. 
Our formulation is similar to a formulation proposed in 
a recent contribution \cite{farrell2022new}, 
where the solution $u$ belongs to $L^2(\Omega)$. 
In this article, we derive a unified formulation for simply supported \eqref{bc1} and Cahn--Hilliard \eqref{bc2} boundary conditions (BCs), and extend the analysis to the extended Fisher--Kolmogorov equation. 

Here we start with a minimisation problem for the biharmonic 
equation and introduce the gradient as a new unknown as in 
\cite{CR74,JP82,CHH00,Lam11a,Lam11b}. Then we write 
a variational equation of this new equation using a Lagrange multiplier. 

We use the Sobolev spaces $L^2(\Omega)$,  $H^{s}(\Omega)$ for $s \in \BBR$ 
and $H(\div,\Omega)$, which  are defined in a standard way, equipped with their usual inner products and norms \cite{LM72,Ada75,Cia78,BS94}.  
Defining
\begin{align*}
 & V:= \begin{cases}
 H^2(\Omega)\cap H_0^1(\Omega) &  \text{ for simply supported BCs,}\\
 \{ v\in H^2(\Omega): \,\, \partial v /\partial \bn =0 \,\, \text{on}\,\, \Gamma, \,\, \int_{\Omega}v=0 \} & \text{ for Cahn--Hilliard BCs,}
\end{cases}
\end{align*}
we consider the following variational form of the biharmonic problem \eqref{biharm}
\begin{equation}\label{vbiharm}
J(u)=\inf_{v \in V}J(v),
\end{equation}
with 
\begin{equation}\label{func}
J(v):=\frac{1}{2} \int_{\Omega}|\Delta v|^2 \dx-\ell(v),\quad \ell(v):=\int_{\Omega}f\,v  \dx.
\end{equation}
The minimisation problem associated with \eqref{func} is equivalent to finding $u \in V$
such that 
\begin{equation}\label{weak}
 A(u,v) = \ell(v) \qquad \forall \; v \in V,
 \end{equation}
where 
\[A (u, v) := \int_{\Omega} \Delta u \Delta v \dx.\]
The existence and uniqueness of the solution of \eqref{vbiharm} and \eqref{weak}
are well established \cite{CR74,Cia78,DP01}.

We need the following two Hilbert spaces to define our new mixed formulation. 
Let \[ W := \begin{cases} H^1_0(\Omega) \quad&\text{for simply supported BCs}, \\
\{ v \in H^1(\Omega):\, \int_{\Omega}v \dx =0\}\quad&\text{for Cahn-Hilliard BCs},
\end{cases}
\]
and 
\[ \bM := \begin{cases} H(\div, \Omega) \quad&\text{for simply supported BCs},\\
H_0(\div,\Omega)\quad&\text{for Cahn-Hilliard BCs},
\end{cases}
\]
where 
\[ H_0(\div,\Omega) := \{ \btau \in H(\div,\Omega):\, 
\btau \cdot \bn = 0 \;\text{on}\; \Gamma\}.\]

Introducing the new variable $\bsigma := \nabla u$ (as in the Ciarlet--Raviart formulation, $\bsigma$ represents the velocity of the flow in the context of Stokes equations, or the rotation vector in the case of plate equations), the biharmonic problem is now recast as a constrained minimisation problem \cite{BLS17, Cia78, CR74}
\begin{equation*}\label{mspline} 
\argmin\limits_{\substack{(u,\sbsigma) \in [W \times \bM]\\ \sbsigma=\nabla u}} \left(
\frac{1}{2}\|\div  \bsigma\|_{L^2(\Omega)}^2 -\ell(u)\right).
\end{equation*}
\begin{remark}
The spaces $W$ and $\bM$ are defined in 
such a way that we get the correct boundary conditions from this minimisation formulation 
for both types of boundary conditions.
\end{remark}

To arrive at an ultra-weak formulation, we multiply the constraint 
$ \bsigma=\nabla u $ by a Lagrange multiplier  $\bpsi \in \bM $ and 
integrate by parts to get 
\[ \int_{\Omega} \bsigma  \cdot \bpsi\dx+ \int_{\Omega} u \div  \bpsi\dx =0.\]
Let \[ U := \begin{cases} L^2(\Omega) \quad&\text{for simply supported BCs},\\
\{ v \in L^2(\Omega):\, \int_{\Omega}v \dx =0\}\
\quad&\text{for Cahn--Hilliard BCs}.
\end{cases}
\]
Let $\bbV := U \times   \bM$. Here $\bbV$ is equipped with the 
standard graph norm: 
\[ \|(v,\btau)\|^2_{\bbV} := \|v\|_{0,\Omega}^2+\| \btau\|_{\div,\Omega}^2,\]
where $\|\btau\|_{\div,\Omega}^2:=\|\btau\|^2_{0,\Omega}+\|\div \btau\|^2_{0,\Omega}.$

We now define the constrained space $\CV$ as 
\[ \CV :=\{ (v,\btau) \in \bbV:\, \int_{\Omega} \btau  \cdot \bpsi \dx+ \int_{\Omega} v \div  \bpsi \dx =0, \quad \forall\bpsi \in \bM\}.\]
This leads to a minimisation problem of finding $(u,\bsigma) \in \bbV$ such that 
\begin{equation}\label{vbnew}
\CJ(u,\bsigma)=\inf_{(v,\sbtau) \in \CV}J(v,\btau),
\end{equation}
with 
\begin{equation*}
J(v,\btau):=\frac{1}{2} \int_{\Omega}|\div  \btau |^2 \dx-\ell(v).
\end{equation*}

\begin{remark}
A similar three-field formulation has been proposed in \cite{BLS17}, where the constraint is 
written as 
\[  \int_{\Omega} (\bsigma -\nabla u) \cdot \bpsi \dx=0,\qquad 
\forall\bpsi \in [L^2(\Omega)]^d, \]
and  $u \in H^1_0(\Omega)$.
In that case, a discrete Lagrange multiplier space should be chosen very carefully 
to satisfy the stability. Invoking an integration by parts and using the boundary condition, 
our new formulation requires the solution $u \in L^2(\Omega)$. 
Therefore, we name it an ultra-weak formulation.
\end{remark}

 \begin{remark}
If $u \in V$ is the solution to the minimisation problem \eqref{vbiharm}, then 
$(u,\nabla u)\in \CV$ is also the solution to the minimisation problem \eqref{vbnew}.
Conversely, if $(u,\bsigma)\in \CV$ is the solution to the minimisation problem \eqref{vbnew}, we get  \[ 
\int_{\Omega} \bsigma  \cdot \bpsi \dx+ \int_{\Omega} u\div  \bpsi \dx =0, \qquad \forall\bpsi \in \bM.
\] 
If $ u \in H^2(\Omega)$, an integration by parts leads to
\[ 
\int_{\Omega} \bsigma  \cdot \bpsi \dx- \int_{\Omega} \nabla u \cdot \bpsi \dx + 
\int_{\Gamma} u \bpsi\cdot \bn\,d\sigma=0, \qquad \forall\bpsi \in \bM.
\] 
In particular, choosing $\bpsi\in \bM$ with $\bpsi\cdot\bn=0$ on $\Gamma$ produces
$\bsigma = \nabla u$ for both cases of boundary conditions. Using 
$\bsigma = \nabla u$ and the fact that $\bpsi \in \bM$, we get 
$ u =0$ on $\Gamma$ for the case of simply supported boundary conditions. For the case of 
Cahn-Hilliard boundary conditions, as $\bsigma \in \bM$, we have 
$ \bsigma \cdot \bn = 0$ on $\Gamma$, which gives 
$\nabla u \cdot \bn =0$ on $\Gamma$.  Hence $u \in V$ for both types of boundary conditions, and  $u$ is the solution to the minimisation problem \eqref{vbiharm}. 
\end{remark}

This leads to a saddle point problem of finding 
$ ((u,\bsigma),\bphi) \in \bbV \times \bM$ such that 
\begin{subequations}\label{saddle}
    \begin{alignat}{3}
     &a((u,\bsigma),(v,\btau)) +  b((v,\btau),\bphi) & {}={} & \ell(v),\qquad  &\forall&(v,\btau) \in \bbV, \\
     & b((u,\bsigma),\bpsi)& ={} & 0\qquad
  &\forall&\bpsi \in  \bM,
    \end{alignat}
\end{subequations}
where  bilinear forms $a(\cdot,\cdot)$ and 
$b(\cdot,\cdot)$  are given by
\begin{subequations}\label{def}
\begin{align}
a((u,\bsigma),(v,\btau))&:= 
\int_{\Omega} \div   \bsigma \,\div  \btau \dx,\label{def:a}\\
b((v,\btau),\bpsi)&:=\int_{\Omega} \btau\cdot \bpsi + v\div  \bpsi \dx.\label{def:b}
\end{align}
\end{subequations}
\begin{remark}
If $u \in H^1(\Omega)$, the second equation of \eqref{saddle} yields $\bsigma = \nabla u$.
Using $v=0$ in  the first equation of \eqref{saddle} and integrating by parts, we see that 
$\bphi = \nabla (\Delta u)$ if $u \in H^3(\Omega)$.
\end{remark}

\section{Stability analysis}\label{sec:wellp}
In this section, we show the coercivity of the bilinear form $a(\cdot,\cdot)$ and inf-sup condition of the bilinear form $b(\cdot,\cdot)$ on the kernel space $\CV$.  
\begin{lemma}
There exists a constant $C>0$ such that for $(v, \btau) \in \CV$ 
\[ a((v,\btau),(v,\btau)) \geq C \left(\|\btau \|_{\div,\Omega}^2+ \|v\|_{0,\Omega}^2\right).\]
\end{lemma}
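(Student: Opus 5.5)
Since $a((v,\btau),(v,\btau))=\|\div\btau\|_{0,\Omega}^2$, the plan is to show that on the constrained space $\CV$ the divergence of $\btau$ controls both $\|\btau\|_{0,\Omega}$ and $\|v\|_{0,\Omega}$. The first step is to identify elements of $\CV$. Let $(v,\btau)\in\CV$. Taking $\bpsi\in[C_0^\infty(\Omega)]^d\subset\bM$ in the constraint gives $\int_\Omega v\div\bpsi\dx=-\int_\Omega\btau\cdot\bpsi\dx$. Hence the distributional gradient of $v$ equals $\btau\in[L^2(\Omega)]^d$, so $v\in H^1(\Omega)$ and $\nabla v=\btau$.

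The second step recovers the boundary conditions. For the simply supported case, $\bM=H(\div,\Omega)$. Integrating by parts with general $\bpsi\in H(\div,\Omega)$ then leaves $\langle v,\bpsi\cdot\bn\rangle_\Gamma=0$ for all such $\bpsi$. Since the normal trace map from $H(\div,\Omega)$ onto $H^{-1/2}(\Gamma)$ is surjective, this gives $v\in H^1_0(\Omega)$. For the Cahn--Hilliard case, $v\in U$ already has zero mean, so $v\in W$; moreover $\btau=\nabla v\in H_0(\div,\Omega)$. In both cases $v\in W$ and $\Delta v=\div\btau\in L^2(\Omega)$.

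The third step is a Poincaré-type chain. Set $g:=\div\btau$.
\begin{itemize}
\item \emph{Simply supported case.} Testing $-\Delta v=-g$ with $v\in H^1_0(\Omega)$ gives $\|\nabla v\|_{0,\Omega}^2=-\int_\Omega g\,v\dx\le \|g\|_{0,\Omega}\|v\|_{0,\Omega}$. Applying the Poincaré inequality $\|v\|_{0,\Omega}\le C_P\|\nabla v\|_{0,\Omega}$ yields $\|\btau\|_{0,\Omega}=\|\nabla v\|_{0,\Omega}\le C_P\|g\|_{0,\Omega}$ and then $\|v\|_{0,\Omega}\le C_P^2\|g\|_{0,\Omega}$.
\item \emph{Cahn--Hilliard case.} The integration by parts $\int_\Omega \nabla v\cdot\nabla v\dx=-\int_\Omega v\,\div\btau\dx$ is justified because $\btau\cdot\bn=0$ on $\Gamma$. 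The same argument then goes through with the Poincaré--Wirtinger inequality for zero-mean functions.
\end{itemize}
Summing the two bounds gives $\|\btau\|_{\div,\Omega}^2+\|v\|_{0,\Omega}^2\le(1+C_P^2+C_P^4)\|\div\btau\|_{0,\Omega}^2$, which is the claim with $C=(1+C_P^2+C_P^4)^{-1}$.

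The main obstacle is the first two steps: showing rigorously that the kernel $\CV$ consists exactly of pairs $(v,\nabla v)$ with $v\in W$. The delicate part is the boundary trace argument in the simply supported case, which relies on the duality between $H^{1/2}(\Gamma)$ and $H^{-1/2}(\Gamma)$ and on the surjectivity of the normal trace. Convexity of $\Omega$ is not needed for this lemma.
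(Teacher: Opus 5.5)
Your proof is correct, but it takes a different route from the paper's.

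\textbf{Your route.} You first identify the kernel exactly, $\CV=\{(v,\nabla v):\ v\in W,\ \Delta v\in L^2(\Omega)\}$. This uses distributional derivatives and, in the simply supported case, the surjectivity of the normal trace $H(\div,\Omega)\to H^{-1/2}(\Gamma)$. You then apply the primal Poincar\'e or Poincar\'e--Wirtinger inequality.

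\textbf{The paper's route.} The paper never characterises $\CV$. It picks $\bpsi\in\bM$ with $\div\bpsi=v$ and $\|\bpsi\|_{0,\Omega}\le C\|v\|_{0,\Omega}$; in the Cahn--Hilliard case this is solvable in $H_0(\div,\Omega)$ precisely because $v$ has zero mean. Testing the constraint with this $\bpsi$ gives $\|v\|_{0,\Omega}\le C\|\btau\|_{0,\Omega}$, which is a dual form of the Poincar\'e inequality. It then tests with $\bpsi=\btau$ to get $\|\btau\|_{0,\Omega}^2=-\int_\Omega v\div\btau\dx$, the same identity as in your third step.

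\textbf{What each buys.} The paper's argument is shorter and needs no trace or duality theory. More importantly, it carries over essentially verbatim to the discrete kernel $\CV_h$ in Section~4, needing only a bounded discrete right inverse of the divergence for the Raviart--Thomas/$\CP_k$ pair. Your Steps~1--2 do not transfer directly to $\CV_h$: $v_h\in U_h$ is discontinuous and $\btau_h$ is only a discrete weak gradient. In exchange, your route yields more structural information. It shows that the kernel consists exactly of gradient pairs, which is the content of the paper's equivalence remark, without assuming $u\in H^2(\Omega)$. With elliptic regularity on the convex domain, it even gives $\CV=\{(v,\nabla v):\ v\in V\}$. It also gives the explicit constant $C=(1+C_P^2+C_P^4)^{-1}$ in terms of the Poincar\'e constant.
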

\begin{proof}
Let $(v, \btau) \in \CV$. Notice from the definition \eqref{def:a} of $a(\cdot,\cdot)$ that
\begin{equation}\label{eq0}
a((v,\btau),(v,\btau)) = \|\div  \btau\|^2_{0,\Omega}.\end{equation}
We aim to show that
\[ \|v\|_{0,\Omega} \leq C \|\btau\|_{0,\Omega},\quad\text{and}\quad \|\btau\|_{0,\Omega} \leq C \|\div \btau\|_{0,\Omega}.
\]
Since $(v, \btau) \in \CV$, we have 
\begin{equation}\label{eq1}
\int_{\Omega} \btau\cdot \bpsi + v\div  \bpsi \dx=0,
\quad \forall\bpsi \in \bM.
\end{equation}
We choose $\bpsi\in \bM$ as a solution of the divergence  equation \cite[Lemma 11.2.3]{BS94}
\[ \div \bpsi = v\] with the property 
\[ \|\bpsi\|_{0,\Omega} \leq C \|v\|_{0,\Omega}.
\]
This in \eqref{eq1} and  the Cauchy--Schwarz inequality lead to
\[\|v\|_{0,\Omega}^2  = -\int_{\Omega} \btau\cdot \bpsi \dx 
\leq \|\btau\|_{0,\Omega}\|\bpsi\|_{0,\Omega}\leq C
\|\btau\|_{0,\Omega}\|v\|_{0,\Omega}. \]
Thus, we have 
\begin{equation}\label{eq3} \|v\|_{0,\Omega} \leq C \|\btau\|_{0,\Omega}.
\end{equation}
For the second step, choosing $\bpsi = \btau$ in \eqref{eq1}, we
obtain 
\begin{equation*}
\|\btau\|_{0,\Omega}^2 = -\int_{\Omega} v\div  \btau \dx.
\end{equation*}
This followed by the Cauchy--Schwarz inequality and \eqref{eq3} proves that 
\begin{equation}\label{eq4}
\|\btau\|_{0,\Omega} \leq C \|\div  \btau\|_{0,\Omega}.
\end{equation}
The conclusion follows by using 
\eqref{eq0}, \eqref{eq3} and \eqref{eq4}.
\end{proof}
\begin{lemma}
The bilinear form $b(\cdot,\cdot)$ satisfies the inf-sup condition:
\begin{eqnarray}\label{infsupc} 
\sup_{(v,\sbtau) \in \bbV }
\frac{b((v,\btau),\bpsi)}
{\|(v,\btau)\|_{\bbV}} 
\geq  \|\bpsi\|_{\div, \Omega}, 
\qquad \forall\bpsi\in \bM.
\end{eqnarray}
Hereafter, the supremum is taken 
over all functions of the underlying space with 
non-zero norm on the denominator.
\end{lemma}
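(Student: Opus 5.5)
The plan is to prove \eqref{infsupc} with an explicit test pair built from $\bpsi$ itself, using no earlier lemma. Fix $\bpsi\in\bM$ with $\|\bpsi\|_{\div,\Omega}>0$. From \eqref{def:b}, $b((v,\btau),\bpsi)=\int_\Omega \btau\cdot\bpsi\dx+\int_\Omega v\div\bpsi\dx$. The two terms are each maximised by aligning $\btau$ with $\bpsi$ and $v$ with $\div\bpsi$.

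\emph{Admissibility of the test pair.} I take
\[ v:=\div\bpsi,\qquad \btau:=\bpsi. \]
First I check that $(v,\btau)\in\bbV$. In the simply supported case this is immediate. In the Cahn--Hilliard case $\bpsi\in H_0(\div,\Omega)$, so the divergence theorem gives $\int_\Omega\div\bpsi\dx=\int_\Gamma\bpsi\cdot\bn=0$, hence $v\in U$.

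\emph{Numerator.} Inserting the pair gives
\[ b((v,\btau),\bpsi)=\|\bpsi\|_{0,\Omega}^2+\|\div\bpsi\|_{0,\Omega}^2=\|\bpsi\|_{\div,\Omega}^2 . \]

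\emph{Denominator.} Here
\[ \|(v,\btau)\|_{\bbV}^2=\|\div\bpsi\|_{0,\Omega}^2+\|\bpsi\|_{\div,\Omega}^2 . \]
The quotient therefore equals
\[ \frac{\|\bpsi\|_{\div,\Omega}^2}{\bigl(\|\div\bpsi\|_{0,\Omega}^2+\|\bpsi\|_{\div,\Omega}^2\bigr)^{1/2}} . \]
Using $\|\div\bpsi\|_{0,\Omega}\le\|\bpsi\|_{\div,\Omega}$, this is bounded below by $\|\bpsi\|_{\div,\Omega}/\sqrt2$.

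\emph{Main obstacle: the constant.} The constant is the delicate step, since \eqref{infsupc} claims the sharp value $1$. My first attempt would be to decouple the two contributions. The $v$-part of the supremum alone gives exactly $\|\div\bpsi\|_{0,\Omega}$, attained at $v=\div\bpsi$, $\btau=0$. The $\btau$-part alone gives the dual norm $\sup_{\sbtau}\int_\Omega\btau\cdot\bpsi\dx/\|\btau\|_{\div,\Omega}$. Since the two components of $\bbV$ are orthogonal in the product norm, the full supremum equals the square root of the sum of the squares of these two quantities.

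To reach $\|\bpsi\|_{\div,\Omega}$ I would need the dual norm to equal $\|\bpsi\|_{0,\Omega}$. I would try to show this by a density argument: pick $\btau_n\in\bM$ close to $\bpsi$ in $L^2$ while $\|\div\btau_n\|_{0,\Omega}$ stays small, for instance by smoothing the divergence-free part of a Helmholtz decomposition of $\bpsi$. This is precisely where I expect difficulty. The gradient part of $\bpsi$ cannot be approximated in $L^2$ by fields of small divergence, so I am not confident the argument closes. If it fails, the fallback is to record the estimate with constant $1/\sqrt2$ from the explicit pair above. That weaker constant is all that the Babu\v{s}ka--Brezzi argument in the following sections actually needs.
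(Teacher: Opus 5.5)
You use the same test pair as the paper, $v=\div\bpsi$, $\btau=\bpsi$. Your evaluation of the quotient is correct; the paper's is not. The paper asserts that the quotient is at least $\sqrt{\|\bpsi\|_{0,\Omega}^2+\|\bpsi\|_{\div,\Omega}^2}$. But the Cauchy--Schwarz inequality gives $b((v,\btau),\bpsi)\le\|(v,\btau)\|_{\bbV}\|\bpsi\|_{\div,\Omega}$ for all $(v,\btau)\in\bbV$, so that bound is impossible for every $\bpsi\neq 0$. For this pair the quotient is $\|\bpsi\|_{\div,\Omega}^2/(\|\div\bpsi\|_{0,\Omega}^2+\|\bpsi\|_{\div,\Omega}^2)^{1/2}$, which reaches $\|\bpsi\|_{\div,\Omega}$ only when $\div\bpsi=0$. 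Your zero-mean check for $\div\bpsi$ in the Cahn--Hilliard case is a needed detail that the paper omits.

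The obstruction you suspected at the last step is real and cannot be removed. The inequality with constant $1$ is false, so no density argument will close it. Take $\bpsi=\nabla p$ with $0\neq p\in H^1_0(\Omega)$ and $\Delta p\in L^2(\Omega)$, so that $\bpsi\in\bM$. For Cahn--Hilliard BCs, take instead $p\in H^1(\Omega)$ with zero mean, $\Delta p\in L^2(\Omega)$ and $\partial p/\partial\bn=0$. For every $\btau\in\bM$, integration by parts has no boundary term in either case, so $\int_\Omega\btau\cdot\bpsi\dx=-\int_\Omega p\,\div\btau\dx$. This number is therefore bounded by both $\|\btau\|_{0,\Omega}\|\nabla p\|_{0,\Omega}$ and $\|\div\btau\|_{0,\Omega}\|p\|_{0,\Omega}$. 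Adding the two resulting lower bounds for $\|\btau\|_{0,\Omega}^2$ and $\|\div\btau\|_{0,\Omega}^2$ gives
\[
\sup_{\sbtau\in\bM}\frac{\bigl(\int_\Omega\btau\cdot\bpsi\dx\bigr)^2}{\|\btau\|_{\div,\Omega}^2}\le\frac{\|\nabla p\|_{0,\Omega}^2\,\|p\|_{0,\Omega}^2}{\|p\|_{0,\Omega}^2+\|\nabla p\|_{0,\Omega}^2}<\|\nabla p\|_{0,\Omega}^2=\|\bpsi\|_{0,\Omega}^2 .
\]
By your own orthogonal decoupling, the squared supremum in the lemma is then strictly less than $\|\div\bpsi\|_{0,\Omega}^2+\|\bpsi\|_{0,\Omega}^2=\|\bpsi\|_{\div,\Omega}^2$.

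So the correct statement is the inf-sup condition with some constant $\beta<1$. Your explicit pair already proves it with $\beta=1/\sqrt2$, independent of $\Omega$, and that is all the Babu\v{s}ka--Brezzi theory and Theorem~\ref{th:cv} require. Make the fallback your proof and drop the density attempt. The discrete inf-sup argument in Section~\ref{sec:fe} repeats the paper's slip. There $\div\bM_h\subset U_h$, including the zero-mean constraint for Cahn--Hilliard BCs, so the pair $(\div\bpsi_h,\bpsi_h)$ gives the same constant $1/\sqrt2$ uniformly in $h$.
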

\begin{proof}
Let $\bpsi\in \bM$. Choosing $v=\div \bpsi$ and $\btau = \bpsi$, we have
\begin{align*}
\sup_{(v,\sbtau) \in \bbV }
\frac{b((v,\btau),\bpsi)}
{\|(v,\btau)\|_{\bbV}} \geq \sqrt{\|\bpsi\|^2_{0,\Omega}+\|\bpsi\|_{\div, \Omega}^2}\geq \|\bpsi\|_{\div, \Omega}.
\end{align*}
This concludes the proof.

\end{proof}

\begin{remark}
The biharmonic equation 
with clamped boundary conditions $
u =\frac{\partial u}{\partial \bn}=0$ on $\Gamma$
corresponds to the  saddle point problem of finding 
$ ((u,\bsigma),\bphi) \in \bbV \times H(\div, \Omega) $  with $\bbV = L^2(\Omega) \times H_0(\div,\Omega)$ such that 
\begin{alignat*}{3}
 &a((u,\bsigma),(v,\btau)) + b((v,\btau),\bphi) &{}={}& \ell(v),\qquad&\forall&
(v,\btau) \in \bbV, \\
&b((u,\bsigma),\bpsi) & {} = {} & 0, \qquad&\forall&  \bpsi \in  H(\div, \Omega).
\end{alignat*}
In this case, the formulation is not well-posed as the inf-sup condition \eqref{infsupc} does not hold. 
A similar issue exists for the well-known Ciarlet--Raviart formulation \cite{CR74,BOP80}.
 
\end{remark}

\section{Finite element discretisation} \label{sec:fe}
Let $\CT_h$ be a shape-regular partition of the domain  $\Omega$ in 
simplices having the mesh-size $h$. 
Note that for $d=2$ a simplex is a triangle, and for $d=3$ 
it is a tetrahedron. In the following, we use a positive generic constant 
$C$, which may take different values at different places but is always 
independent of the mesh-size $h$.  
Let $\CP_k(T)$ be the space of polynomials of degree $k \geq 0$ in the simplex $T$.
We consider the following finite dimensional subspaces $U_h$ and $\bM_h$ 
based on the triangulation $\CT_h$ of 
the Hilbert spaces $U$ and $\bM$, respectively  as follows: 
\begin{eqnarray*}
 U_h &:=&
\{ u_h \in U: \, u_h|_{T}  \in \CP_k(T), \quad \forall T \in \CT_h\}, \\
\bM_h&:=&\{\bphi_h \in \bM:\,\bphi_h|_{T} \in  
\left[ \CP_k(T) \right]^d + \bx \, \CP_k(T),\quad \forall T \in \CT_h\},
\end{eqnarray*}
where $\bx\in \BBR^d$ is the space coordinate vector.
Here $\bM_h$ is the Raviart--Thomas space of order \( k \) \cite{Bra01,BBF13}. 
Let $\bbV_h := U_h \times \bM_h$.
The finite element problem is to find 
$((u_h,\bsigma_h),\bphi_h) \in \bbV_h \times \bM_h$ such that 
\begin{subequations}\label{dsaddle}
    \begin{alignat}{3}
     &a((u_h,\bsigma_h),(v_h,\btau_h)) + b((v_h,\btau_h),\bphi_h) &{}={}&   \ell(v_h),\quad 
&\forall& (v_h,\btau_h) \in \bbV_h, \\
&b((u_h,\bsigma_h),\bpsi_h) &{}={}& 0\quad &\forall& \bpsi_h \in  \bM_h.
    \end{alignat}
\end{subequations}
Now we prove the   well-posedness of the discrete formulation as in the continuous setting. The continuity of both bilinear forms $a(\cdot,\cdot)$ and $b(\cdot,\cdot)$, 
and the linear form $\ell(\cdot)$ follows 
as in the continuous setting as we use conforming spaces. 
To prove the inf-sup condition for the bilinear form $b(\cdot,\cdot)$, we choose $v_h=\div \bpsi_h $ and $\btau_h = \bpsi_h$ for all $\bpsi_h \in \bM_h$ as in the continuous setting to get 
 \begin{eqnarray*}
\sup_{(v_h,\sbtau_h) \in U_h \times  \bM_h }
\frac{b((v_h,\btau_h),\bpsi_h)}{ \|(v_h,\btau_h)\|_{\bbV} }=
\sup_{(v_h,\sbtau_h) \in U_h \times  \bM_h }
\frac{b((v_h,\btau_h),\bpsi_h)}{\sqrt{\|v_h\|^2_{0,\Omega}+ \|\btau_h\|^2_{\div,\Omega}}}
\geq  \|\bpsi_h\|_{\div,\Omega}.
\end{eqnarray*}

Now we turn our attention to prove the  coercivity of the bilinear form $a(\cdot,\cdot)$ on 
 the kernel space $\CV_h$ defined by 
\[ \CV_h :=\{ (v_h, \btau_h) \in U_h  \times \bM_h:\,
b((v_h, \btau_h),\bpsi_h) =  0,\quad \forall\bpsi_h \in \bM_h\}.\]
Let $(v_h, \btau_h) \in \CV_h$ so that 
 \begin{equation}\label{eqn2}
 \int_{\Omega} \btau_h\cdot \bpsi_h \dx + \int_{\Omega} 
 v_h \div  \bpsi_h \dx =0, \quad \forall\bpsi_h \in \bM_h. 
 \end{equation}
We choose $\bpsi_h\in \bM_h$ as a solution of the divergence  equation (see, e.g., \cite{BBF13}) 
\[ \div \bpsi_h = v_h\] with the property 
\[ \|\bpsi_h\|_{0,\Omega} \leq C \|v_h\|_{0,\Omega},
\]
in \eqref{eqn2} and  get 
\[\|v_h\|_{0,\Omega}^2  = -\int_{\Omega} \btau_h\cdot \bpsi_h \dx 
\leq \|\btau_h\|_{0,\Omega}\|\bpsi_h\|_{0,\Omega}\leq C
\|\btau_h\|_{0,\Omega}\|v_h\|_{0,\Omega}. \]
This gives us 
\begin{equation}\label{eqn4}
\|v_h\|_{0,\Omega} \leq C \|\btau_h\|_{0,\Omega}. 
\end{equation}
Choosing $\bpsi_h= \btau_h$, we have 
 \[\|\btau_h\|^2_{0,\Omega}  =  - \int_{\Omega} v_h \div \btau_h \dx.
 \]
 This yields 
 \begin{equation}\label{eqn3}
\|\btau_h\|^2_{0,\Omega}   \leq \|v_h\|_{0,\Omega} \|\div  \btau_h\|_{0,\Omega}.
 \end{equation}
Combining \eqref{eqn4} with \eqref{eqn3}, we get 
\begin{equation}\label{eqn5}
\|\btau_h\|_{0,\Omega}   
\leq C \|\div  \btau_h\|_{0,\Omega}.
\end{equation}
Hence, from \eqref{eqn5} we have the coercivity result 
\[ 
 a((v_h,\btau_h),(v_h,\btau_h))  = 
 \| \div  \btau_h \|_{0,\Omega}^2 \geq C \|\btau_h \|_{\div,\Omega}^2 + 
 \|v_h\|_{0,\Omega}^2, \quad \forall(v_h,\btau_h) \in \CV_h.
 \]
 
The following theorem for the a priori error estimate follows from the theory of 
mixed finite elements \cite{BF91}.
\begin{theorem}\label{th:cv}
There exists a unique solution $((u_h,\bsigma_h),\bphi_h) \in \bbV_h \times \bM_h$ to the discrete saddle-point problem \eqref{dsaddle}. Moreover, we have that  
\[ \|u_h\|_{0,\Omega} + \|\bsigma_h\|_{\div,\Omega} + \|\bphi_h\|_{\div,\Omega} \leq C \|f\|_{0,\Omega},\]
and for the solution $((u, \bsigma), \bpsi)\in \bbV \times \bM$ of the saddle point 
problem \eqref{saddle}, we have 
\begin{eqnarray*}
&&\|u-u_h\|_{0,\Omega} + \|\bsigma-\bsigma_h\|_{\div,\Omega}  +  \|\bphi-\bphi_h\|_{\div,\Omega}  \\
&& \leq  
  C \biggl(\inf_{v_h \in  U_h} \|u-v_h\|_{0,\Omega} + \inf_{\btau_h \in  \bM_h}   \|\bsigma-\btau_h\|_{\div,\Omega}
 +\inf_{\bpsi_h \in  \bM_h}   \|\bphi-\bpsi_h\|_{\div,\Omega}\biggr).
 \end{eqnarray*}
 Moreover, if $u \in H^{k+1}(\Omega)$, $\bsigma,\bphi \in [H^{k+1}(\Omega)]^d $ and 
 $\div \bsigma,\div\bphi \in H^{k+1}(\Omega) $, we have 
\begin{eqnarray*}
&&\|u-u_h\|_{0,\Omega} + \|\bsigma-\bsigma_h\|_{\div,\Omega}  +  \|\bphi-\bphi_h\|_{\div,\Omega}  \\
&& \leq  
  C h^{k+1} \left(\|u\|_{k+1,\Omega} + \|\bsigma\|_{k+1,\Omega} + \|\div\bsigma\|_{k+1,\Omega} + 
  \|\bphi\|_{k+1,\Omega} + \|\div \bphi\|_{k+1,\Omega}\right).
 \end{eqnarray*}
 
\end{theorem}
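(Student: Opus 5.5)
The argument is a direct application of the abstract Babu\v{s}ka--Brezzi theory for conforming discretisations of saddle-point problems \cite{BF91}, using the discrete properties established just before the statement. First I will record boundedness: $|a((u,\bsigma),(v,\btau))|\le \|\bsigma\|_{\div,\Omega}\|\btau\|_{\div,\Omega}$, $|b((v,\btau),\bpsi)|\le \|(v,\btau)\|_{\bbV}\|\bpsi\|_{\div,\Omega}$, and $|\ell(v)|\le\|f\|_{0,\Omega}\|v\|_{0,\Omega}$, all with constants independent of $h$ because $\bbV_h\subset\bbV$ and $\bM_h\subset\bM$. Next I will use the discrete inf-sup bound with constant $1$ (test with $v_h=\div\bpsi_h\in U_h$, $\btau_h=\bpsi_h$). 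The inclusion $\div\bM_h\subset U_h$ is the key compatibility property of Raviart--Thomas spaces; in the Cahn--Hilliard case the zero-mean condition follows from $\bpsi_h\cdot\bn=0$. Finally I will use coercivity of $a$ on $\CV_h$ with an $h$-independent constant. Brezzi's theorem then gives existence and uniqueness of $((u_h,\bsigma_h),\bphi_h)$ together with the stability bound $\|(u_h,\bsigma_h)\|_{\bbV}+\|\bphi_h\|_{\div,\Omega}\le C\|\ell\|_{\bbV'}\le C\|f\|_{0,\Omega}$.

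For the quasi-optimal estimate, I will subtract the discrete problem from the continuous one to get Galerkin orthogonality, which holds because the method is conforming. The standard Brezzi argument then bounds the error by the best approximation in $\bbV_h\times\bM_h$. Concretely, take an arbitrary $(w_h,\bchi_h)\in\bbV_h\times\bM_h$ and write the discrete error $(u_h-w_h,\dots)$ as the solution of a discrete saddle problem whose right-hand side is the consistency residual from $(u-w_h,\dots)$. Applying discrete stability and then the triangle inequality gives the result. Since the product norm splits, the infimum separates into the three individual infima as stated. A technical point is that $\CV_h\not\subset\CV$, so the kernel-approximation step must go through the discrete inf-sup condition; the general theorem in \cite{BF91} already handles this.

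The main work I expect is the $h$-independence of the coercivity constant on $\CV_h$. It rests on the discrete right-inverse of the divergence, i.e.\ the existence of $\bpsi_h\in\bM_h$ with $\div\bpsi_h=v_h$ and $\|\bpsi_h\|_{0,\Omega}\le C\|v_h\|_{0,\Omega}$. To obtain this I would take the continuous $\bpsi$ from \cite[Lemma 11.2.3]{BS94}, which lies in $H^1$ on the convex domain, and apply the Raviart--Thomas Fortin interpolant $\Pi_h$. The commuting property $\div\Pi_h\bpsi=P_h\div\bpsi=P_h v_h=v_h$ then gives the divergence identity, and $H^1$-stability of $\Pi_h$ gives the norm bound.

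For the rate, I will bound the three infima by interpolation. For $u$ I use the $L^2$-projection onto $U_h$, which gives $Ch^{k+1}\|u\|_{k+1,\Omega}$. For $\bsigma$ and $\bphi$ I use $\Pi_h$, for which $\|\bsigma-\Pi_h\bsigma\|_{0,\Omega}\le Ch^{k+1}\|\bsigma\|_{k+1,\Omega}$ and $\|\div(\bsigma-\Pi_h\bsigma)\|_{0,\Omega}=\|\div\bsigma-P_h\div\bsigma\|_{0,\Omega}\le Ch^{k+1}\|\div\bsigma\|_{k+1,\Omega}$, and likewise for $\bphi$. The Fortin operator preserves the normal-trace condition, so these approximants lie in $\bM_h$ in both boundary-condition cases.
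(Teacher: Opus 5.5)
Your proposal is correct and takes the same route as the paper. It uses the discrete inf-sup condition via the test pair $(v_h,\btau_h)=(\div\bpsi_h,\bpsi_h)$, and $h$-uniform coercivity of $a(\cdot,\cdot)$ on $\CV_h$ through a bounded discrete right inverse of the divergence; your Fortin-interpolant construction just makes explicit what the paper cites. It then applies the Babu\v{s}ka--Brezzi theory, with standard $L^2$-projection and Raviart--Thomas interpolation estimates for the rates. One harmless slip, which the paper shares: that test pair gives $b((v_h,\btau_h),\bpsi_h)=\|\bpsi_h\|_{\div,\Omega}^2$ but $\|(v_h,\btau_h)\|_{\bbV}^2=\|\bpsi_h\|_{0,\Omega}^2+2\|\div\bpsi_h\|_{0,\Omega}^2$, so the inf-sup constant it yields is $1/\sqrt{2}$ rather than $1$; this does not matter, since only the constant's $h$-independence is used.
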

\section{Extended Fisher--Kolmogorov equation}\label{sec:EFK}
In this section, we apply our mixed formulation to the following time-dependent fourth-order extended Fisher--Kolmogorov equation
\begin{subequations}\label{def:time_dep_model_problem}
\begin{alignat}{2}
 \partial_t u+ \gamma \Delta^2 u -\Delta u + g(u)&=0 &\qquad&\text{in}\,\,J\times\Omega, \\
 u(x,0)&=u_0(x) &\qquad&\text{in}\,\,\Omega,
\end{alignat}
\end{subequations}
where the time interval is $J:=(0,T]$ and the nonlinear term is of the form $g(u):= u^3-u$. The positive parameter $\gamma$ controls the energetic penalty for curvature (bending) of $u$, acting as a higher-order diffusion that suppresses small-scale structure and introduces an intrinsic length scale (see, e.g., \cite{dee1988bistable, peletier1997spatial}). Applying the mixed formulation for the biharmonic problem introduced earlier, we get 
the following time-dependent saddle point problem: For every $t>0$, find  
$ ((u(t),\bsigma(t)),\bphi(t)) \in \bbV \times \bM$ such that 
\begin{subequations}\label{saddle-cont}
    \begin{alignat}{2}
    (\partial_tu(t), v)_{0,\Omega} +\gamma \left( a((u(t),\bsigma(t)),(v,\btau)) +  b((v,\btau),\bphi(t))\right) & \nonumber\\
      + c(\bsigma(t),\btau)+(g(u(t)),v)_{0,\Omega}&={}  \ell(v)\quad  &\forall&(v,\btau) \in \bbV, \\
     b((u(t),\bsigma(t)),\bpsi)& ={}  0\quad
  &\forall&\bpsi \in  \bM
    \end{alignat}
\end{subequations}
with $u(.,0) =u_0(x)$. From now on, and whenever clear from the context, we will drop the notation $(t)$ for the trial functions. Recall the bilinear forms $a(\cdot,\cdot)$ and 
$b(\cdot,\cdot)$  from \eqref{def}. We denote the usual $L^2$-inner product by $(\cdot,\cdot)_{0,\Omega}$, and define the bilinear form 
\begin{align*}
c(\bsigma,\btau)&:= 
\int_{\Omega}   \bsigma \cdot \btau \,\dx.
\end{align*}
Since $g(u)=u^3-u$ is a locally Lipschitz continuous function, there exists a unique solution to \eqref{saddle-cont} for a finite time $T>0$ \cite{Pazy}.

We use the following shorthand notation:
\begin{align*} \|\bullet\|_{L^p(J;*)}^{p}:=\int_{J}\|\bullet(s)\|_{*}^p\,ds\quad\text{for}\;p\in[1,\infty),\quad \|\bullet\|_{L^\infty(\ol{J};*)}:=\sup_{s\in\ol{J}}\|\bullet(s)\|_{*},
\end{align*}
where the (seminorm) $\|\cdot\|_{*}$  depends on the context.
\begin{lemma}
The following stability estimate holds:
\begin{align*}
\frac12\|u\|^2_{L^\infty(\ol{J};L^2(\Omega))} +\gamma \|\div \bsigma\|^2_{L^2(J);L^2(\Omega))}+\|\bsigma\|^2_{L^2(J;L^2(\Omega))} \lesssim \|u_0\|^2_{0,\Omega}+\|f\|^2_{L^2(J;L^2(\Omega))}.
\end{align*}
\end{lemma}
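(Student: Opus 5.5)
We would prove the estimate by the standard energy argument: test the first equation of \eqref{saddle-cont} with $(v,\btau)=(u(t),\bsigma(t))$ and use the constraint to eliminate the Lagrange multiplier. With this choice the multiplier term is $\gamma\, b((u,\bsigma),\bphi)$. The second equation of \eqref{saddle-cont}, tested with $\bpsi=\bphi(t)\in\bM$, says exactly that $b((u,\bsigma),\bphi)=0$, so this term vanishes. We also have $a((u,\bsigma),(u,\bsigma))=\|\div\bsigma\|_{0,\Omega}^2$ and $c(\bsigma,\bsigma)=\|\bsigma\|_{0,\Omega}^2$, so we obtain
\[
\frac12\frac{d}{dt}\|u\|_{0,\Omega}^2+\gamma\|\div\bsigma\|_{0,\Omega}^2+\|\bsigma\|_{0,\Omega}^2+(g(u),u)_{0,\Omega}=\ell(u).
\]

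Next we treat the nonlinearity. Since $g(u)u=u^4-u^2$, we get $(g(u),u)_{0,\Omega}=\|u\|_{L^4(\Omega)}^4-\|u\|_{0,\Omega}^2$. The quartic term is nonnegative and is dropped, and $\|u\|_{0,\Omega}^2$ is moved to the right-hand side. For the load, Cauchy--Schwarz and Young's inequality give $\ell(u)\le \frac12\|f\|_{0,\Omega}^2+\frac12\|u\|_{0,\Omega}^2$. This yields the differential inequality
\[
\frac12\frac{d}{dt}\|u\|_{0,\Omega}^2+\gamma\|\div\bsigma\|_{0,\Omega}^2+\|\bsigma\|_{0,\Omega}^2\le \frac12\|f\|_{0,\Omega}^2+\frac32\|u\|_{0,\Omega}^2 .
\]

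Integrating from $0$ to $t\le T$ gives
\[
\frac12\|u(t)\|_{0,\Omega}^2+\int_0^t\bigl(\gamma\|\div\bsigma\|^2_{0,\Omega}+\|\bsigma\|^2_{0,\Omega}\bigr)ds \le \frac12\|u_0\|_{0,\Omega}^2+\frac12\|f\|^2_{L^2(J;L^2(\Omega))}+\frac32\int_0^t\|u(s)\|_{0,\Omega}^2ds .
\]
Gronwall's lemma, applied to the function $t\mapsto \|u(t)\|_{0,\Omega}^2$ plus the nonnegative integral terms, bounds $\|u(t)\|_{0,\Omega}^2$ by $e^{3t}(\|u_0\|^2_{0,\Omega}+\|f\|^2_{L^2(J;L^2(\Omega))})$. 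Taking the supremum over $t\in\ol{J}$ and reinserting this bound into the integrated inequality at $t=T$ gives the claim. The hidden constant in $\lesssim$ depends on $T$ through $e^{3T}$ but not on $\gamma$.

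The main obstacle is the anti-dissipative part $-u$ of $g$. It cannot be absorbed by the left-hand side without Gronwall, because $u$ is only controlled in $L^2$ and not by the $\div\bsigma$ or $\bsigma$ terms.

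An alternative that avoids exponential growth in $T$ would use the first coercivity lemma of Section~\ref{sec:wellp}, namely $\|u\|_{0,\Omega}\le C\|\bsigma\|_{0,\Omega}$ on $\CV$, which holds since $(u(t),\bsigma(t))\in\CV$. This absorbs $\|u\|_{0,\Omega}^2$ into the $\bsigma$ terms only if $C^2$ is small enough, which is not guaranteed, so Gronwall is the safe route.

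A minor technical point is justifying $\frac{d}{dt}\frac12\|u\|_{0,\Omega}^2=(\partial_t u,u)_{0,\Omega}$. This requires enough time regularity of the solution guaranteed by \cite{Pazy}, or a Galerkin approximation argument.
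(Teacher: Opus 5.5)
Your proposal is correct and follows essentially the same route as the paper's proof. Like the paper, you test with $(v,\btau)=(u,\bsigma)$ and $\bpsi=\bphi$ to eliminate the multiplier, drop the quartic part of $(g(u),u)_{0,\Omega}$, and bound the right-hand side by $\frac12\|f\|_{0,\Omega}^2+\frac32\|u\|_{0,\Omega}^2$ before concluding with Gronwall's lemma; you simply spell out the integrated Gronwall step and the $e^{3T}$ dependence that the paper leaves implicit.
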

\begin{proof}
Choosing $v=u, \btau=\bsigma$ and $\bpsi=\bphi$ and using $b((u,\bsigma),\bphi)=0$, we get   
\begin{align*}
\frac12\frac{d}{dt}\|u\|^2_{0,\Omega} + \gamma \|\div \bsigma\|^2_{0,\Omega} +\|\bsigma\|^2_{0,\Omega} &= \ell(u)-(g(u),u)_{0,\Omega}\\
& = (f,u)_{0,\Omega} -\|u^2\|^2_{0,\Omega} + \|u\|^2_{0,\Omega}\\
&\leq \frac12\|f\|^2_{0,\Omega} + \frac{3}2\|u\|^2_{0,\Omega},
\end{align*}
with the definition of $g(u)=u^3-u$ in the second step and the Cauchy--Schwarz inequality followed by the Young inequality in the last step. Invoking Gronwall's lemma, we conclude the proof.
\end{proof}
This stabiliy result and the local existence implies the global existence of the unique solution to \eqref{saddle-cont}.
   The corresponding finite element problem is to find, for all $t>0$, 
$((u_h(t),\bsigma_h(t)),\bphi_h(t)) \in \bbV_h \times \bM_h$ such that 
\begin{subequations}\label{saddle-dis}
    \begin{alignat}{2}
     (\partial_tu_h, v_h)_{0,\Omega} +\gamma (a((u_h,\bsigma_h),(v_h,\btau_h)) +  b((v_h,\btau_h),\bphi_h)) \\
  \nonumber   + c(\bsigma_h,\btau_h)+(g(u_h),v_h)_{0,\Omega}&=  \ell(v_h)&\quad&\forall (v_h,\btau_h) \in \bbV_h, \\
      b((u_h,\bsigma_h),\bpsi_h) &=  0& &\forall \bpsi_h \in  \bM_h
    \end{alignat}
\end{subequations}
with $u_h(\cdot,0) = P_h u_0(x)$, where $P_h$ is the standard $L^2$-projection, that is, for all $v_h\in U_h$,
\begin{align}
(v-P_hv,v_h)_{0,\Omega} &= 0 \qquad\forall v_h\in U_h.\label{L2}
\end{align}
The existence and uniqueness of the solution $u_h$ follows analogously as in the continuous problem. 

Let us recall that if $v\in H^k(\Omega)$, then the following approximation property holds:
\begin{align*}
\|v-P_hv\|_{0,\Omega}\lesssim h^k|v|_{H^k(\Omega)}.
\end{align*}
Similarly, let $\bP_h:\bM\to\bM_h$ be the vector-valued $L^2$-projection. Let $\bF_h:\bM\to \bM_h$ be the classical Fortin interpolation operator satisfying the following commutative property (see, e.g., \cite{BBF13}): For all $\bpsi\in\bM$,
\begin{align*}
\div \bF_h\psi = P_h (\div  \psi).
\end{align*}
This provides the orthogonality property
\begin{align}
(\div \bpsi-\div  \bF_h\bpsi,v_h)_{0,\Omega} = 0 \qquad\forall v_h\in U_h.\label{Fortin}
\end{align}
Next, for all $\btau\in [H^{k+1}(\Omega)]^d$ with $\div \btau \in H^{k+1}(\Omega)$, we have the following properties (see, e.g., \cite{BBF13}) 
\begin{subequations}
\begin{align}
\|\btau-\bP_h\btau\|_{0,\Omega} + \|\btau-\bF_h\btau\|_{0,\Omega} &\lesssim h^{k+1} |\btau|_{[H^{k+1}(\Omega]^d},\\
\|\div (\btau-\bP_h\btau)\|_{0,\Omega} + \|\div (\btau-\bF_h\btau)\|_{0,\Omega} &\lesssim h^{k+1} |\div \btau|_{H^{k+1}(\Omega)}.
\end{align}
\end{subequations}
 
We define the errors as
\begin{align*}
 u-u_h  &= (u-P_hu) + (P_hu-u_h) =: \rho_u +\theta_u,\\
 \bsigma-\bsigma_h &= (\bsigma - \bP_h\bsigma) + (\bP_h\bsigma-\bsigma_h) =:\brho_{\bsigma} + \theta_{\bsigma},\\
 \bphi-\bphi_h &= (\bphi -\bF_h\bphi) + (\bF_h\bphi - \bphi_h) =:\rho_{\bphi} + \theta_{\bphi}.
\end{align*}
 
\begin{theorem}
The following error estimate holds:
\begin{align*}
&\|\theta_u\|^2_{L^{\infty}(\ol{J};L^2(\Omega))}+ \gamma\|\div \theta_{\bsigma}\|_{L^2(J;L^2(\Omega))}^2 + \|\theta_{\bsigma}\|^2_{L^{2}(;,L^2(\Omega))} \nonumber\\&\qquad\lesssim\|\partial_t\rho_u\|^2_{L^{2}(J;L^2(\Omega))}+\|\rho_u\|_{L^{2}(J;L^2(\Omega))}^2+\gamma\|\div \rho_{\bsigma}\|_{L^{2}(J;L^2(\Omega))}^2\\
&\quad \qquad +\|\rho_{\bsigma}\|^2_{L^{2}(J;L^2(\Omega))}+\|\rho_{\bphi}\|^2_{L^{2}(J;L^2(\Omega))}.
\end{align*}
\end{theorem}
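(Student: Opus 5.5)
We argue by the standard energy method for semi-discrete parabolic problems: write an error equation for $(\theta_u,\theta_{\bsigma},\theta_{\bphi})$, test it with $(\theta_u,\theta_{\bsigma})$ and $\theta_{\bphi}$, and close with Gronwall's lemma.

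\textbf{Error equations.} Since $\bbV_h\times\bM_h\subset\bbV\times\bM$, we may take discrete test functions in \eqref{saddle-cont} and subtract \eqref{saddle-dis}. This gives, for all $(v_h,\btau_h)\in\bbV_h$ and $\bpsi_h\in\bM_h$,
\begin{align*}
&(\partial_t(u-u_h),v_h)_{0,\Omega}+\gamma\big(a((u-u_h,\bsigma-\bsigma_h),(v_h,\btau_h))+b((v_h,\btau_h),\bphi-\bphi_h)\big)\\
&\qquad+c(\bsigma-\bsigma_h,\btau_h)+(g(u)-g(u_h),v_h)_{0,\Omega}=0,\\
&b((u-u_h,\bsigma-\bsigma_h),\bpsi_h)=0.
\end{align*}
We then split each error into its $\rho$ and $\theta$ parts. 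The projection properties remove several $\rho$-terms:
\begin{itemize}
\item by \eqref{L2}, $(\rho_u,\div\bpsi_h)_{0,\Omega}=0$, because $\div\bpsi_h\in U_h$;
\item by \eqref{Fortin}, $(v_h,\div\rho_{\bphi})_{0,\Omega}=0$;
\item by the definition of $\bP_h$, $c(\rho_{\bsigma},\btau_h)=0$ and $(\rho_{\bsigma},\bpsi_h)_{0,\Omega}=0$.
\end{itemize}
Consequently the second equation becomes $b((\theta_u,\theta_{\bsigma}),\bpsi_h)=0$. In other words, $(\theta_u,\theta_{\bsigma})\in\CV_h$.

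\textbf{Testing.} We choose $v_h=\theta_u$, $\btau_h=\theta_{\bsigma}$ and $\bpsi_h=\theta_{\bphi}$. The term $\gamma\, b((\theta_u,\theta_{\bsigma}),\theta_{\bphi})$ vanishes by the kernel property. What remains is
\[
\tfrac12\tfrac{d}{dt}\|\theta_u\|_{0,\Omega}^2+\gamma\|\div\theta_{\bsigma}\|_{0,\Omega}^2+\|\theta_{\bsigma}\|_{0,\Omega}^2
=-(\partial_t\rho_u,\theta_u)_{0,\Omega}-\gamma(\div\rho_{\bsigma},\div\theta_{\bsigma})_{0,\Omega}-\gamma(\theta_{\bsigma},\rho_{\bphi})_{0,\Omega}-(g(u)-g(u_h),\theta_u)_{0,\Omega}.
\]
The first three terms on the right are handled by Cauchy--Schwarz and Young:
\begin{itemize}
\item the $\theta_{\bsigma}$ and $\div\theta_{\bsigma}$ contributions are absorbed into the left-hand side;
\item the $\theta_u$ contribution is kept for Gronwall;
\item the leftover data terms are $\|\partial_t\rho_u\|^2$, $\gamma\|\div\rho_{\bsigma}\|^2$ and $\gamma^2\|\rho_{\bphi}\|^2$.
\end{itemize}
If $\|\rho_{\bsigma}\|$ also appears on the right, it enters only through $c$ and $b$ terms, and those are harmless by the same Young argument.

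\textbf{Nonlinear term (main obstacle).} Write $u-u_h=\rho_u+\theta_u$, so that $g(u)-g(u_h)=(u-u_h)\big(u^2+uu_h+u_h^2-1\big)$. For the $\theta_u$ part we use monotonicity: $u^2+uu_h+u_h^2\ge0$, hence
\[
-(\,(u^3-u_h^3)\,,\,u-u_h)_{0,\Omega}\le 0 .
\]
Together with the linear part $-1$, this gives
\[
-(g(u)-g(u_h),\theta_u)_{0,\Omega}\le \|\theta_u\|_{0,\Omega}^2+(\rho_u,\theta_u)_{0,\Omega}+\big|\big((u^3-u_h^3)\,,\,\rho_u\big)_{0,\Omega}\big| .
\]
The last cross term $(u^3-u_h^3,\rho_u)$ is the delicate step. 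The plan is to bound it by $C\|\rho_u\|_{0,\Omega}\big(\|\rho_u\|_{0,\Omega}+\|\theta_u\|_{0,\Omega}\big)$. This needs $u$ and $u_h$ to be bounded in $L^\infty$:
\begin{itemize}
\item for $u$, this follows from the assumed regularity;
\item for $u_h$, we use a standard boundedness argument, e.g.\ a discrete $L^\infty$ bound obtained from the discrete stability estimate (analogous to the preceding lemma) plus an inverse-estimate/bootstrapping argument.
\end{itemize}
A simpler alternative is to assume $g$ is globally Lipschitz after truncation, which is the usual device.

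\textbf{Conclusion.} Granting this bound, we have
\[
\tfrac{d}{dt}\|\theta_u\|_{0,\Omega}^2+\gamma\|\div\theta_{\bsigma}\|_{0,\Omega}^2+\|\theta_{\bsigma}\|_{0,\Omega}^2\lesssim \|\theta_u\|_{0,\Omega}^2+\text{data},
\]
where the data terms are $\|\partial_t\rho_u\|^2$, $\|\rho_u\|^2$, $\gamma\|\div\rho_{\bsigma}\|^2$, $\|\rho_{\bsigma}\|^2$ and $\|\rho_{\bphi}\|^2$. We integrate over $(0,t)$. The initial error satisfies $\theta_u(0)=P_hu_0-P_hu_0=0$. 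Gronwall's lemma then gives the claimed estimate, with the constant depending on $T$ and $\gamma$.
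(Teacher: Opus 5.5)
Your proposal is correct and follows essentially the same route as the paper: you subtract to get the error equations, use the $L^2$- and Fortin orthogonalities to show $(\theta_u,\theta_{\bsigma})\in\CV_h$, and test with $(\theta_u,\theta_{\bsigma})$ and $\theta_{\bphi}$. You then apply Young's inequality, bound the $g$-term via $L^\infty$ control of $u$ and $u_h$, and close with Gronwall using $\theta_u(0)=0$. Your monotonicity split of the cubic term is only a cosmetic variation, because the cross term $(u^3-u_h^3,\rho_u)_{0,\Omega}$ still needs $\|u_h\|_{L^\infty}$, which is exactly the bound the paper simply asserts; you also miss that $(\partial_t\rho_u,\theta_u)_{0,\Omega}$ and $(\rho_u,\theta_u)_{0,\Omega}$ vanish by \eqref{L2}, while you correctly note that $c(\rho_{\bsigma},\theta_{\bsigma})=0$, a term the paper needlessly keeps.
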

\begin{proof}
Testing the continuous problem \eqref{saddle-cont} against $(v,\btau):= (v_h,\btau_h)$ and subtracting \eqref{saddle-dis} from \eqref{saddle-cont}, we get the following error equations
\begin{subequations}\label{saddle-error}
    \begin{align}
    (\partial_t(u-u_h), v_h)_{0,\Omega} +\gamma (a((u-u_h,\bsigma-\bsigma_h),(v_h,\btau_h)) & \nonumber \\+ b((v_h,\btau_h),\bphi-\bphi_h))+ c(\bsigma-\bsigma_h,\btau_h)+(g(u)-g(u_h),v_h)_{0,\Omega}&{}={}  0, \\
    b((u-u_h,\bsigma-\bsigma_h),\bpsi_h) &{}={}  0, \label{error-b}
    \end{align}
\end{subequations}
for all $(v_h,\btau_h) \in \bbV_h$ and all $\bpsi_h \in  \bM_h$. Since  $\div \theta_{\phi}\in U_h$, the $L^2$-orthogonality \eqref{L2} implies that
\begin{align*}
b((u-P_hu,\bsigma-\bP_h\bsigma),\bpsi_h) = (\bsigma-\bP_h\bsigma,\bpsi_h)_{0,\Omega} + (u-P_hu,\div \bpsi_h)_{0,\Omega} = 0,
\end{align*} 
for all $\bpsi_h\in\bM_h$. This in \eqref{error-b} provides that $b((\theta_u,\theta_{\bsigma}),\bpsi_h) = 0$ for all $\bpsi_h\in \bM_h$. Invoking this, using the error decompositions, and choosing $(v_h,\btau_h) = (\theta_u,\theta_{\bsigma})$ and $\bpsi_h=\theta_{\phi}$, we arrive at
\begin{align}
& \frac12\frac{d}{dt}\|\theta_u\|^2_{0,\Omega}+ \gamma\|\div \theta_{\bsigma}\|_\Omega^2 + \|\theta_{\bsigma}\|^2_{0,\Omega} = -(\partial_t\rho_u,\theta_u)_{0,\Omega} - \gamma ((\div  \rho_{\bsigma},\div  \theta_{\bsigma})_{0,\Omega}\nonumber \\
&\qquad +b((\theta_u,\theta_{\bsigma}),\rho_{\bphi}))  - (\rho_{\bsigma},\theta_{\bsigma})_{0,\Omega}- (g(u)-g(u_h),\theta_u)_{0,\Omega}.\label{32}
\end{align}
The orthogonalities \eqref{L2} and \eqref{Fortin} lead to $(\partial_t\rho_u,\theta_u)_{0,\Omega}=0$, and $b((\theta_u,\theta_{\bsigma}),\rho_{\bphi})= (\theta_{\bsigma},\rho_{\bphi})_{0,\Omega}$. 
The definition of the nonlinear term $g$ and the Young inequality show that
\begin{align*}
\|g(u)-g(u_h)\|_{0,\Omega} &\leq \|u-u_h\|_{0,\Omega}\|u^2+uu_h+u_h^2-1\|_{\infty,\Omega}\\
& \leq C_L \|u-u_h\|_{0,\Omega}\leq C_L (\|\rho_u\|_{0,\Omega}+\|\theta_u\|_{0,\Omega}),
\end{align*}
where we used the Sobolev embedding result $H^2(\Omega) \hookrightarrow L^\infty(\Omega) $ and the regularity estimate for both $u$ and $u_h$ in the second step, and the triangle inequality in the last step. The previous displayed estimate and the Young inequality in \eqref{32} prove that
\begin{align*}
&\frac12\frac{d}{dt}\|\theta_u\|^2_{0,\Omega}+ \frac{\gamma}{2}\|\div \theta_{\bsigma}\|_\Omega^2 + \frac12\|\theta_{\bsigma}\|^2_{0,\Omega} \\&\leq\frac12\|\partial_t\rho_u\|^2_{0,\Omega}+\frac{\gamma}{2}\|\div \rho_{\bsigma}\|_{0,\Omega}^2+\|\rho_{\bphi}\|^2_{0,\Omega}+\|\rho_{\bsigma}\|^2_{0,\Omega}+C_L\|\rho_u\|_{0,\Omega}^2+C\|\theta_u\|^2_{0,\Omega}.
\end{align*}
Integrating from $0$ to $t$ and invoking Gronwall lemma, we conclude the proof.
\end{proof}

\section{Numerical results}\label{sec:results} 
This section presents a series of numerical experiments illustrating the performance of the proposed mixed formulation for the extended Fisher--Kolmo\-go\-rov problem \eqref{def:time_dep_model_problem}. All tests are carried out  using exact solutions in closed form so that the corresponding forcing term and boundary conditions can be manufactured. We consider two different types of boundary conditions: the simply supported case and the Cahn--Hilliard case.

The spatial discretisation employs Raviart--Thomas spaces of order $k=0,1$ for the flux variables $\bsigma$ and $\bphi$, coupled with piecewise polynomial approximations of compatible degree for the remaining unknown $u$ in the mixed system. We consider a sequence of uniformly refined meshes of $\Omega$ with mesh-size $h$. Errors are reported in the natural $H(\mathrm{div})$-norm associated with the mixed formulation, and, in addition, in the $L^2$-norm for the biharmonic solution $u$, all at the final time step. The discretisation in time is done with backward Euler's scheme, utilising a small constant time step so that the total error is dominated by the spatial error. 

Let $e(h)$ denote the computed error associated with the solution $u$, the gradient 
$\bsigma$ or the Lagrange multiplier $\bphi$ on a mesh with size $h$. The experimental order of convergence (EOC) in space between two meshes of sizes $h_1$ and $h_2$ ($h_2<h_1$) is defined by
\[
\texttt{EOC}(h_1,h_2)
\;=\;
\frac{\log\big(e(h_1)/e(h_2)\big)}{\log(h_1/h_2)}.
\]
We also recall  that when reporting EOCs in the tables, we follow the convention that a rate of $r$ indicates the error behaves like
\[
e(h)=\mathcal{O}(h^{r}).
\]

For the case of Cahn--Hilliard boundary conditions, the zero-mean constraint for $u$ is imposed with a real Lagrange multiplier requiring to add two terms in the left-hand side system.

\begin{table}[t!]
    \caption{Convergence for the Fisher--Kolmogorov equation at the final time $T = 0.1$,  against manufactured solutions for different polynomial degrees in the 2D case with simply supported boundary conditions. Here we set $\gamma = 1$. The symbol $\star$ here, and in the tables below, indicates that no convergence rate has been computed for the coarsest mesh refinement.}
    \label{tab:cv-ss}
    \centering
    \begin{tabular}{|rc|cccccc|}
    \hline
    \texttt{DoF} & $h$ & $e(u)$ & \texttt{EOC} & $e(\bsigma)$ & \texttt{EOC} & $e(\bphi)$ & \texttt{EOC} \\
               \hline 
               \hline
               \multicolumn{8}{|c|}{Ultra-weak scheme with $k=0$}\\
               \hline
    40 & 0.7071 & 2.72e-02 & $\star$  & 5.42e-01 & $\star$  & 1.07e+01 & $\star$  \\ 
   144 & 0.3536 & 1.42e-02 & 0.936 & 2.85e-01 & 0.927 & 5.61e+00 & 0.936 \\
   544 & 0.1768 & 7.18e-03 & 0.987 & 1.44e-01 & 0.982 & 2.84e+00 & 0.981 \\
  2112 & 0.0884 & 3.60e-03 & 0.997 & 7.23e-02 & 0.996 & 1.43e+00 & 0.995 \\
  8320 & 0.0442 & 1.80e-03 & 0.999 & 3.62e-02 & 0.999 & 7.14e-01 & 0.999 \\
 33024 & 0.0221 & 9.00e-04 & 1.000 & 1.81e-02 & 1.000 & 3.57e-01 & 1.000 \\
\hline 
               \multicolumn{8}{|c|}{Ultra-weak scheme with $k=1$}\\
               \hline
   120 & 0.7071 & 8.19e-03 & $\star$ & 1.62e-01 & $\star$ & 3.19e+00 & $\star$ \\ 
   448 & 0.3536 & 2.15e-03 & 1.931 & 4.28e-02 & 1.919 & 8.44e-01 & 1.916 \\
  1728 & 0.1768 & 5.45e-04 & 1.979 & 1.09e-02 & 1.978 & 2.14e-01 & 1.977 \\
  6784 & 0.0884 & 1.37e-04 & 1.995 & 2.73e-03 & 1.994 & 5.38e-02 & 1.994 \\
 26880 & 0.0442 & 3.42e-05 & 1.999 & 6.82e-04 & 1.999 & 1.35e-02 & 2.000 \\
107008 & 0.0221 & 8.63e-06 & 1.986 & 1.72e-04 & 1.986 & 3.01e-03 & 2.013  \\             
     \hline
    \end{tabular}

\end{table}

The first set of tests corresponds to the simply supported case and on the unit square domain $\Omega = (0,1)^2$ and the time domain $[0,0.1]$ with the time step $\Delta t= 0.01$. The manufactured solution 
\[ u = t \sin(\pi x) \sin(\pi y)\] 
is chosen so that the solution is sufficiently smooth, ensuring that the theoretical convergence rates are attainable, and it also satisfies the simply supported boundary conditions. Note that in this case we take the non-homogeneous version of the extended Fisher--Kolmogorov equation and the corresponding source term is computed from the manufactured solutions. Table~\ref{tab:cv-ss} displays the error history (individual errors and estimated rates of convergence) for RT$_0$ and RT$_1$, respectively. The results confirm the theoretical error estimates. For RT$_0$ we observe first-order convergence in the energy norm while  for RT$_1$, the convergence rates increase to second order. In all cases the asymptotic regime is reached rapidly, and the computed orders of convergence match the approximation properties of the finite element spaces.

The second set of experiments considers the Cahn--Hilliard boundary conditions, also in the unit square domain. The   manufactured solution to the extended Fisher--Kolmogorov problem is now
\[ u = t\cos(\pi x) \cos(\pi y),\] 
and it is used to prescribe the data as well as the exact mixed variables. The corresponding error history is reported in Table~\ref{tab:cv-CH}, and sample solutions for the scheme with $k=1$ and on a fine mesh are depicted at time $T=0.1$ in Figure~\ref{fig:cv-2d}. 

\begin{table}[t!]
    \caption{Convergence for the extended Fisher--Kolmogorov equation at the final time $T = 0.1$, against manufactured solutions for different polynomial degrees in the 2D case with Cahn--Hilliard boundary conditions. Here we set $\gamma = 1$.}
    \label{tab:cv-CH}
    \centering
    \begin{tabular}{|rc|cccccc|}
    \hline
    \texttt{DoF} & $h$ & $e(u)$ & \texttt{EOC} & $e(\bsigma)$ & \texttt{EOC} & $e(\bphi)$ & \texttt{EOC} \\
               \hline 
               \hline 
               \multicolumn{8}{|c|}{Ultra-weak scheme with $k=0$}\\
               \hline
           41 & 0.7071 & 2.72e-02 & $\star$ & 5.45e-01 & $\star$ & 1.08e+01 & $\star$ \\ 
   145 & 0.3536 & 1.43e-02 & 0.928 & 2.86e-01 & 0.931 & 5.62e+00 & 0.938 \\
   545 & 0.1768 & 7.19e-03 & 0.991 & 1.44e-01 & 0.985 & 2.85e+00 & 0.981 \\
  2113 & 0.0884 & 3.60e-03 & 0.998 & 7.23e-02 & 0.996 & 1.43e+00 & 0.995 \\
  8321 & 0.0442 & 1.80e-03 & 1.000 & 3.62e-02 & 0.999 & 7.14e-01 & 0.999 \\
 33025 & 0.0221 & 9.00e-04 & 1.000 & 1.81e-02 & 1.000 & 3.57e-01 & 1.000 \\
 \hline 
               \multicolumn{8}{|c|}{Ultra-weak scheme with $k=1$}\\
               \hline
   121 & 0.7071 & 8.24e-03 & $\star$ & 1.63e-01 & $\star$ & 3.21e+00 & $\star$ \\ 
   449 & 0.3536 & 2.15e-03 & 1.937 & 4.29e-02 & 1.928 & 8.46e-01 & 1.923 \\
  1729 & 0.1768 & 5.45e-04 & 1.982 & 1.09e-02 & 1.981 & 2.15e-01 & 1.980 \\
  6785 & 0.0884 & 1.37e-04 & 1.995 & 2.73e-03 & 1.995 & 5.38e-02 & 1.995 \\
 26881 & 0.0442 & 3.42e-05 & 1.999 & 6.82e-04 & 1.999 & 1.33e-02 & 2.020 \\
107009 & 0.0221 & 7.37e-06 & 1.997 & 1.76e-04 & 1.995 & 3.02e-03 & 2.014 \\
               \hline
    \end{tabular}
\end{table}

\begin{figure}[t!]
    \centering
    \includegraphics[width=0.325\linewidth]{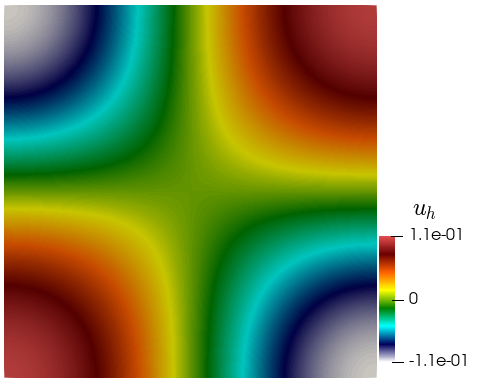}
\includegraphics[width=0.325\linewidth]{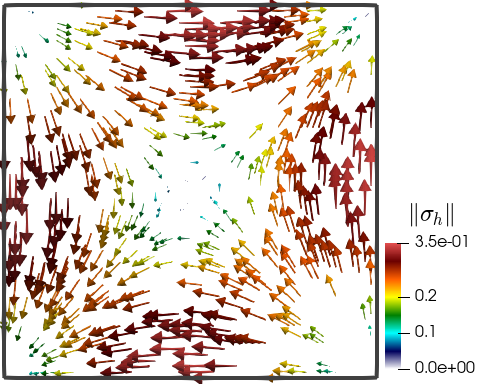}
\includegraphics[width=0.325\linewidth]{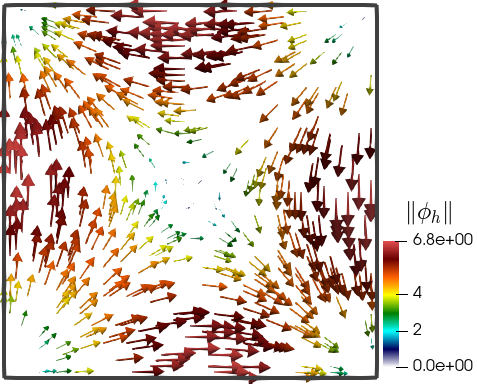}
    \caption{Approximate solutions for the Fisher--Kolmogorov  problem at time $T=0.1$}, in 2D using Cahn--Hilliard boundary conditions. The results correspond to the second-order mixed scheme.
    \label{fig:cv-2d}
\end{figure}

Next, we also show spatial convergence in the 3D case, taking the unit cube domain $\Omega = (0,1)^3$ and considering the manufactured solution 
\[ u = t\sin(\pi x) \sin(\pi y)\sin(\pi z).\] 
For conciseness we only show the case of simply supported boundary conditions, and take the time interval [0,0.01],  $\Delta t = 0.001$, and $\gamma =0.01$. The convergence is reported in Table~\ref{tab:cv-3d}, while we also show samples of approximate primal and mixed variables in Figure~\ref{fig:cv-3d}. 

\begin{table}[t!]
    \caption{Convergence against manufactured solutions in 3D  and using simply supported boundary conditions. Errors computed at the final time $T = 0.01$.}
    \label{tab:cv-3d}
    \centering
   \begin{tabular}{|rc|cccccc|}
    \hline
    \texttt{DoF} & $h$ & $e(u)$ & \texttt{EOC} & $e(\bsigma)$ & \texttt{EOC} & $e(\bphi)$ & \texttt{EOC} \\
               \hline 
               \hline 
                \multicolumn{8}{|c|}{Ultra-weak scheme with $k=0$}\\
               \hline
     42 & 1.7321 & 2.90e-03 & $\star$ & 8.26e-02 & $\star$ & 3.15e+00 & $\star$ \\ 
   288 & 0.8660 & 1.80e-03 & 0.693 & 5.38e-02 & 0.619 & 1.65e+00 & 0.938 \\
  2112 & 0.4330 & 9.60e-04 & 0.904 & 2.88e-02 & 0.901 & 8.55e-01 & 0.945 \\
 16128 & 0.2165 & 4.88e-04 & 0.975 & 1.47e-02 & 0.974 & 4.34e-01 & 0.977 \\
125952 & 0.1083 & 2.45e-04 & 0.994 & 7.36e-03 & 0.994 & 2.18e-01 & 0.994 \\
995328 & 0.0541 & 1.23e-04 & 0.998 & 3.69e-03 & 0.998 & 1.09e-01 & 0.998\\
 \hline 
               \multicolumn{8}{|c|}{Ultra-weak scheme with $k=1$}\\
               \hline
   168 & 1.7321 & 2.08e-03 & $\star$ & 6.07e-02 & $\star$ & 2.15e+00 & $\star$  \\
  1200 & 0.8660 & 6.36e-04 & 1.708 & 1.89e-02 & 1.682 & 5.70e-01 & 1.913 \\
  9024 & 0.4330 & 1.73e-04 & 1.880 & 5.16e-03 & 1.872 & 1.53e-01 & 1.898 \\
 69888 & 0.2165 & 4.42e-05 & 1.967 & 1.32e-03 & 1.966 & 3.91e-02 & 1.968 \\
549888 & 0.1083 & 1.11e-05 & 1.992 & 3.32e-04 & 1.991 & 9.84e-03 & 1.991 \\
4362240 & 0.0541 & 2.78e-06 & 1.998 & 8.32e-05 & 1.998 & 2.46e-03 & 1.998  \\      
               \hline
    \end{tabular}

\end{table}

\begin{figure}[t!]
    \centering
    \includegraphics[width=0.325\linewidth]{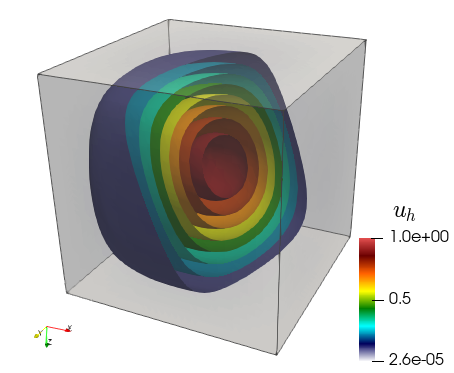}
\includegraphics[width=0.325\linewidth]{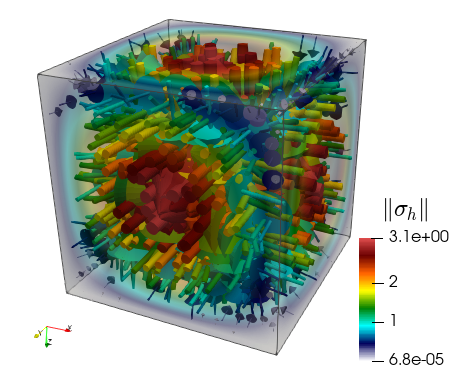}
\includegraphics[width=0.325\linewidth]{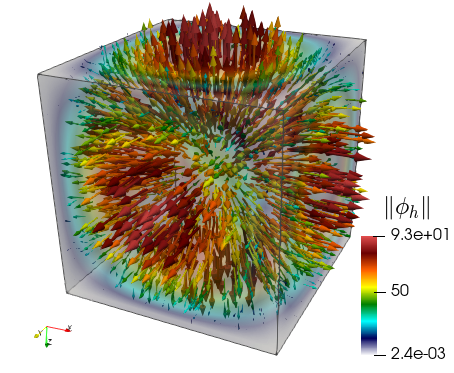}
    \caption{Approximate solutions for the biharmonic problem in 3D using simply supported boundary conditions. The results correspond to the lowest-order mixed scheme.}
    \label{fig:cv-3d}
\end{figure}

The observed convergence rates are  in full agreement with the theoretical predictions from Theorem~\ref{th:cv}. For both polynomial degrees, the optimal error decay of $O(h^{k+1})$ is obtained in all norms. This demonstrates that the proposed mixed formulation is robust with respect to the type of boundary conditions imposed and works in both 2D and 3D.

\begin{figure}[t!]
    \centering
    \includegraphics[width=0.98\textwidth]{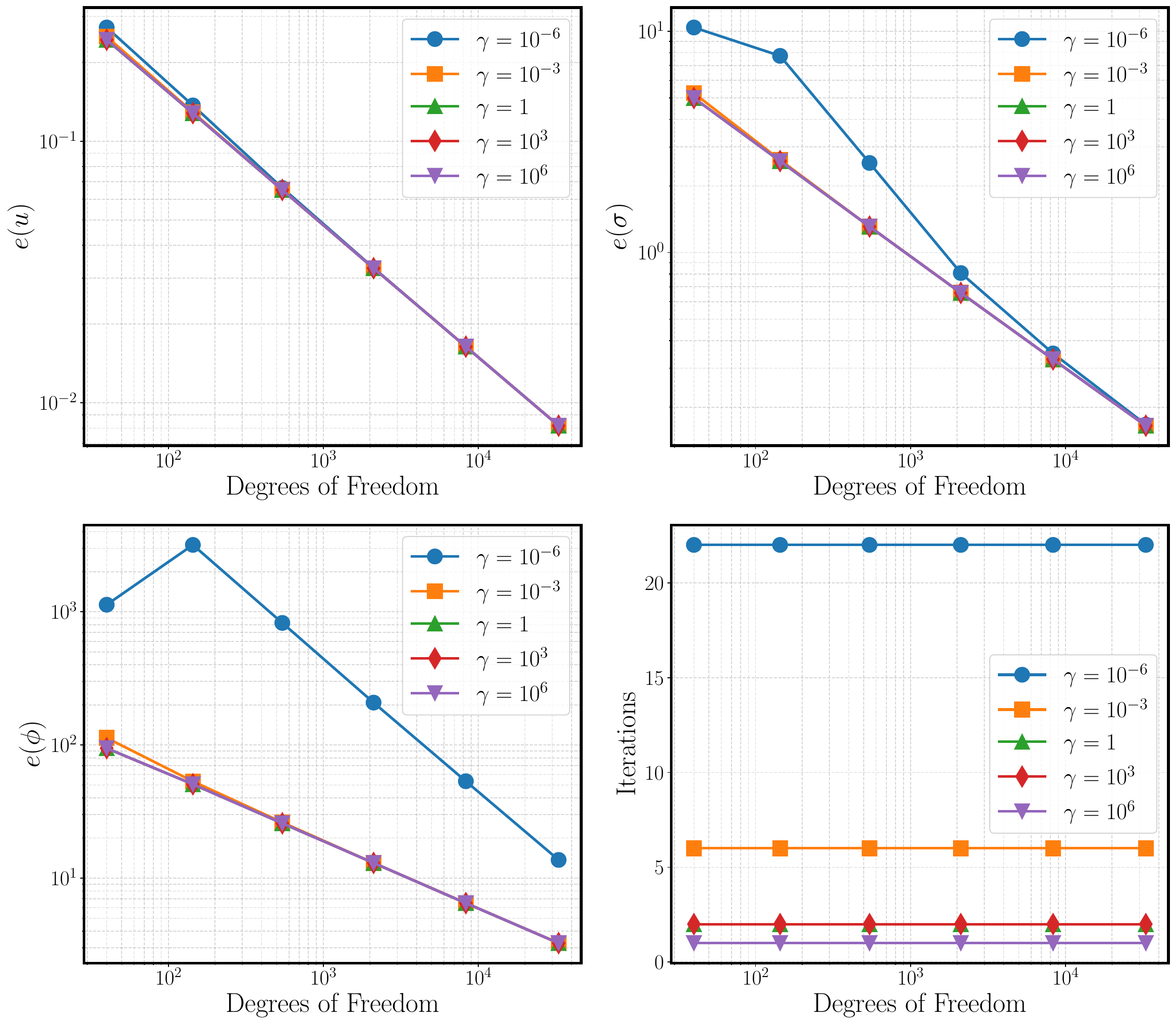}
    \caption{Performance (error history and number of Newton iterations to converge) of the mixed finite element scheme for the extended Fisher--Kolmogorov problem, taking different values of the model parameter $\gamma$.}
    \label{fig:parametric}
\end{figure}

It is also important to address the dependence of the formulation on the values of the parameter $\gamma$. We perform a simple parametric study, taking as base-line case simply supported boundary conditions, the unit square domain, the time domain $[0,1]$ and $\Delta t = 1$ (that is, we only do one time step). The plot in Figure \ref{fig:parametric} shows the  error decay for each unknown and number of required Newton iterations for the convergence of the finite element scheme for different values of $\gamma$. We observe that the convergence of the primal unknown is not affected by variations in $\gamma$ that optimal convergence is attained except for the case of $\gamma = 10^{-6}$ (where essentially the Jacobian matrix becomes of low rank since the off-diagonal blocks associated with the bilinear form $b$ vanish), and only for the Lagrange multiplier. We also observe that the lower the value of $\gamma$ the more difficult is for the nonlinear solver to converge.

Finally, we perform a simulation of the extended Fisher--Kolmogorov equation with simply supported boundary conditions on a 3D geometry of a gear. The time interval is [0, 0.5] and the constant time step is $\Delta t $ = 0.1. This can be considered as the 3D extension of the 2D gear domain convergence tests done for the same equation (in primal form and using generalised finite difference schemes) in \cite[Case 2]{ju2024analysis}. 

The initial condition is taken as the solution of the steady 
problem with source term $f(x,y,z) = 100 \sin(2 \pi x) \sin(2 \pi y) \sin(3 \pi z)$, and 
homogeneous simply supported boundary conditions. The simulation is 
performed up to the final time $T=0.5$ with constant time step $\Delta t = 0.1$. We use    
the parameter $\gamma = 1$ and take the second-order scheme with $k=1$, which give for this 
mesh resolution a total of 694280 degrees of freedom. The numerical solutions are displayed in Figure \ref{fig:gear}, showing smooth concentration of potential near the centre of the gear.
	
\begin{figure}[t!]
    \centering
   \includegraphics[width=0.325\linewidth]{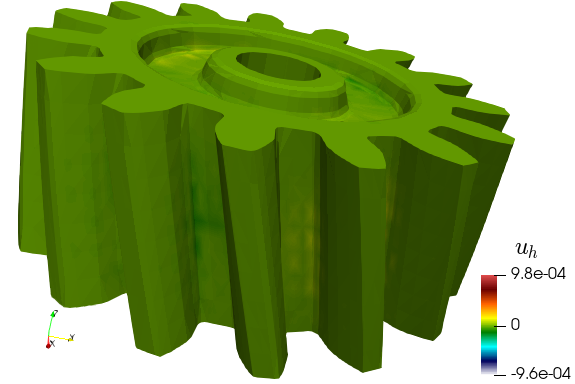}
    \includegraphics[width=0.325\linewidth]{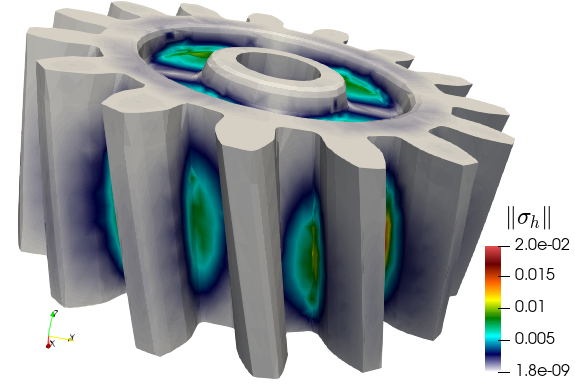}
    \includegraphics[width=0.325\linewidth]{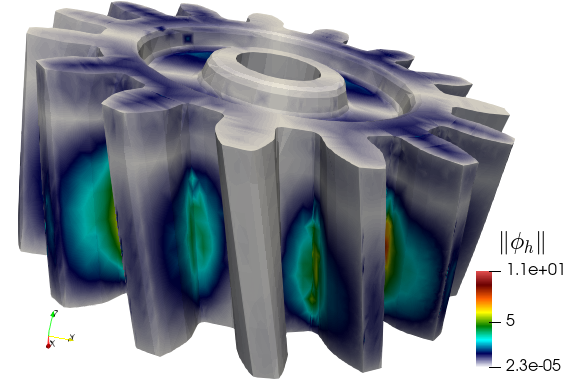}
    \caption{Numerical solution of the extended Fisher--Kolmogorov equation on a gear model at $T=0.5$. Potential, gradient, and Lagrange multiplier obtained with the second-order scheme.}
    \label{fig:gear}
\end{figure}	

\acknowledgement{The work was started when B. P. Lamichhane visited
Monash University in 2023. He gratefully acknowledges Monash University 
for their hospitality.
Part of this work was completed when 
all authors met in a MATRIX workshop at Creswick in May, 2025. We gratefully acknowledge the support from the MATRIX Institute. 
This work has been also supported by the Australian Research Council through the Future Fellowship grant FT220100496.} 

\bibliographystyle{spmpsci}
\bibliography{total}
\end{document}